\numberwithin{equation}{section}
\newtheorem{theorem}{Theorem}[section]
\newtheorem{lemma}[theorem]{Lemma}
\newtheorem*{lemma*}{Lemma}
\newtheorem{corollary}[theorem]{Corollary}
\newtheorem{conjecture}[theorem]{Conjecture}
\newtheorem{question}[theorem]{Question}
\theoremstyle{definition}
\newtheorem{definition}[theorem]{Definition}
\theoremstyle{remark}
\newcommand{\N}{\mathbb{N}}
\newcommand{\Z}{\mathbb{Z}}
\newcommand{\Q}{\mathbb{Q}}
\newcommand{\F}{\mathbb{F}}
\newcommand{\Fx}{\mathbb{F}_2[x]}
\newcommand{\Fpx}{\mathbb{F}_p[x]}
\newcommand{\Zx}{\mathbb{Z}[x]}
\newcommand{\Res}{\text{Res{\hskip 1pt}}}
\title{The Distance to a Squarefree Polynomial Over $\mathbf \Fx$}
\author[1]{Michael Filaseta }
\address{University of South Carolina\\ Department of Mathematics\\ Columbia, SC 29208}
\email{filaseta@math.sc.edu}
\author[2]{Richard A. Moy}
\address{Lee University\\ Department of Mathematical Sciences\\ Cleveland, TN 37320}
\email{rmoy@leeuniversity.edu}
\begin{document}

\begin{abstract}
In this paper, we examine how far a polynomial in  $\Fx$ can be from a squarefree polynomial. 
For any $\epsilon>0$, we prove that for any polynomial $f(x)\in\Fx$ with degree $n$, 
there exists a squarefree polynomial $g(x)\in\Fx$ such that $\deg g \le n$ and $L_{2}(f-g)<(\ln n)^{2\ln(2)+\epsilon}$
(where $L_{2}$ is a norm to be defined).  As a consequence, the
analagous result holds for polynomials $f(x)$ and $g(x)$ in $\Z[x]$. 
\end{abstract}
\maketitle

\section{Introduction}
In the 1960's, P\'al Tur\'an  (cf.~\cite{S67}) posed the problem of determining whether there is an absolute constant $C$ such that 
for every polynomial $f(x)=\sum_{j=0}^n{a_{j} x^{j}} \in \mathbb Z[x]$, 
there is a polynomial $g(x)=\sum_{j=0}^n{b_{j}x^{j}} \in \mathbb Z[x]$ irreducible over the rationals satisfying 
$L(f-g):= \sum_{j=0}^{n}{|b_j-a_j|} \le C$.  
It is currently known that the existence of such a $C$ is connected to an open problem on covering systems of the integers with distinct odd moduli \cite{F2002,S67}; 
if one allows $g(x)$ to have degree $> n$, then one can take $C = 3$ \cite{BF10, S70};  
for all $f(x)$ of degree $\le 40$ such a $g(x)$ exists with $C = 5$ \cite{FM12};  
for the corresponding problem in $\Fx$, if $C$ exists, then $C \ge 4$ \cite{BF10}; and
for the corresponding problem in $\Fpx$ with $p$ an odd prime, if $C$ exists, then $C \ge 3$ \cite{F14}.
Other papers on this topic include \cite{BH97,FM12,G94,LRW07,M10}. 
In \cite{F14}, a case is made for the following conjecture.

\begin{conjecture}\label{conj1pt1}
For every $f(x)\in\Z[x]$ of degree $n \ge 1$, there is an irreducible polynomial $g(x)\in\Z[x]$ of degree at most $n$ satisfying $L(f-g)\le 2$.
\end{conjecture}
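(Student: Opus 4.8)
The plan is to prove that the finite family
\[
\mathcal{P}(f)=\{\,f+h: h\in\Z[x],\ L(h)\le 2,\ \deg(f+h)\le n\,\}
\]
— which consists of $f$ together with the perturbations $f\pm x^{j}$, $f\pm 2x^{j}$, and $f\pm x^{i}\pm x^{j}$ — always contains a polynomial irreducible over $\Q$. I work with $f$ directly; since irreducibility over $\Q$ is insensitive to content, I am free to choose the engine that produces it. The primary engine will be reduction modulo a prime: if some $g\in\mathcal P(f)$ and some prime $\ell$ satisfy $\ell\nmid\operatorname{lc}(g)$ with $\overline{g}\in\Fl[x]$ irreducible of degree $\deg g$, then $g$ is irreducible over $\Q$. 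The point that makes this usable is that with total mass $2$ I can shift one coefficient of $\overline{f}$ by $\pm1$ or $\pm2$, or two coefficients each by $\pm1$, giving a small but genuinely nontrivial set of reachable residue patterns over each $\Fl$, and I may also flip the leading coefficient to keep $\deg g=n$ when needed.

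I would then, for each prime $\ell$, isolate the set $B_{\ell}$ of residue classes $\overline{f}\bmod\ell$ that are \emph{bad}, meaning that no admissible one- or two-slot adjustment (consistent with $L(h)\le 2$ and $\deg\le n$) produces a full-degree irreducible in $\Fl[x]$, and argue that $\bigcap_{\ell}B_{\ell}=\varnothing$, so that no single $f$ is bad modulo every prime. The quoted lower bound $C\ge 4$ for $\Fx$ shows that one prime cannot suffice — there exist $\overline{f}$ that no Hamming-distance-$2$ change makes irreducible over $\F_{2}$ — so the argument must hand off the residues resistant at one prime to another prime. For the residual polynomials whose Galois structure blocks full-degree irreducibility modulo \emph{every} prime (so the mod-$\ell$ engine stalls), I would fall back on the Cohn–Brillhart–Filaseta prime-value criterion, spending the two units of mass to correct the sign pattern and a single coefficient so that, for a suitable integer $b$, the value $g(b)$ is prime and the coefficient-size hypotheses of that criterion hold.

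The hard part, and the reason this is stated as a conjecture, is exactly the claim $\bigcap_\ell B_\ell=\varnothing$. Negating the conjecture forces every $g\in\mathcal P(f)$ to be reducible over $\Q$, hence to factor in $\Fl[x]$ for every prime $\ell\nmid\operatorname{lc}(g)$ simultaneously; such a configuration of forced factorizations across all primes and across all one- and two-slot perturbations is a covering system of congruences in disguise, which is precisely the link to the open covering problem of \cite{F2002,S67}. My strategy to break it is two-pronged: first, to prove an upper bound on the complexity (the number of moduli and their least common multiple) of any covering that could arise in this way, forcing it to fail once $n$ exceeds an explicit threshold; and second, to dispatch the finitely many cases below that threshold by direct search, pushing the computations of \cite{FM12} from $C=5$ and degree $\le 40$ down to the target $C=2$. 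Obtaining an \emph{unconditional} complexity bound strong enough to defeat the distinct-moduli covering constructions is the step I expect to be the genuine obstacle, and it is there that the bulk of the work will lie.
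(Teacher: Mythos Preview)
The paper does not prove this statement: it is stated as Conjecture~\ref{conj1pt1} and left open, with the surrounding text noting that its truth is tied to an unresolved problem on covering systems with distinct odd moduli. There is therefore no proof in the paper to compare your proposal against.

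Your proposal is not a proof but a strategy sketch, and you say as much yourself: the crux is the claim $\bigcap_{\ell} B_{\ell} = \varnothing$, and you acknowledge that obtaining an unconditional bound strong enough to rule out the relevant covering configurations is ``the genuine obstacle.'' That is exactly the known obstruction, and nothing in your outline resolves it. The two-pronged plan you describe --- bound the complexity of any covering that could arise, then finish by computation below the threshold --- is reasonable as a research programme, but the first prong is precisely the open part, and the fallback to a Cohn--Brillhart--Filaseta prime-value criterion does not sidestep it: you would still need to show that the at most two units of mass suffice to force a prime value with the required coefficient-size hypotheses for \emph{every} $f$, which is itself not established.

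In short, there is a genuine gap: the decisive step (ruling out a universal bad $f$ via covering-system bounds, or via any other mechanism) is asserted as future work rather than carried out. As written, this is a plausible plan of attack on an open conjecture, not a proof.
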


In \cite{DS18}, Dubickas and Sha investigated an interesting variant of this conjecture where they asked how far a polynomial $f(x) \in\Z[x]$ can be from a squarefree polynomial, 
that is from a polynomial in $\Z[x]$ not divisible by the square of an irreducible polynomial over $\Q$.

\begin{conjecture}\label{conj1pt2}
For every $f(x)\in\Z[x]$ of degree $n \ge 0$, there is a squarefree polynomial $g(x)\in\Z[x]$ of degree at most $n$ satisfying $L(f-g)\le 2$.
\end{conjecture}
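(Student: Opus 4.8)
The plan is to recast squarefreeness through the resultant. For $g \in \Z[x]$ with $\deg g \ge 1$, the polynomial $g$ has no repeated irreducible factor over $\Q$ if and only if $\gcd(g,g') = 1$ in $\Q[x]$, i.e.\ if and only if $\Res(g,g') \ne 0$; the discriminant of $g$ is $\Res(g,g')$ up to sign and a factor of the leading coefficient. Thus Conjecture~\ref{conj1pt2} is equivalent to the assertion that for every $f \in \Z[x]$ of degree $n$ there is an integer polynomial $h$ with $L(h) \le 2$ and $\deg(f+h) \le n$ for which $\Res(f+h,(f+h)') \ne 0$. Identifying a degree-$\le n$ polynomial with its coefficient vector in $\Z^{n+1}$, the non-squarefree polynomials fill out the discriminant hypersurface $\{\Res(g,g')=0\}$, and the claim is that every lattice point lies within $\ell^1$-distance $2$ of a lattice point off this hypersurface. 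Concretely the admissible nonzero perturbations are $h = \pm x^i$, $h = \pm 2x^i$, and $h = \pm x^i \pm x^j$ with $i \ne j$ (together with $h = 0$ when $f$ is already squarefree).

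First I would exhaust the single-coefficient perturbations. For each index $0 \le i \le n$ set $D_i(t) := \Res\big(f + t x^i,\,(f+tx^i)'\big) \in \Z[t]$, a polynomial of degree at most $2n-1$ in $t$. If $f$ is not squarefree then $D_i(0) = \Res(f,f') = 0$, so $t = 0$ is automatically a root; the goal is to find some $i$ for which $D_i$ fails to vanish at one of $t \in \{-2,-1,1,2\}$, which immediately produces a valid $g = f + t x^i$. This strategy fails only if, for every index $i$, the polynomial $D_i(t)$ either vanishes identically or vanishes at all four of these points.

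The degenerate case $D_i \equiv 0$ can be analyzed completely. Passing to $\Q(t)$ with $t$ transcendental, $D_i \equiv 0$ means $f + t x^i$ acquires a repeated factor $w$ over $\overline{\Q(t)}$, say $f + t x^i = w^2 u$; differentiating in $t$ gives $x^i = w(2w_t u + w u_t)$, so $w \mid x^i$ and hence $w$ is a power of $x$, which forces $x^2 \mid f$ together with $i \ge 2$. In particular $D_0$ and $D_1$ are \emph{never} identically zero, so a nonzero $D_i$ is always at hand. The sole remaining obstruction to a single-coefficient solution is then arithmetic: each available nonzero $D_i(t)$, which has $t = 0$ as a root, might also vanish at all four points $\{-2,-1,1,2\}$. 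A nonzero polynomial of degree $\le 2n-1$ can certainly have these as roots, so single changes of size $\le 2$ need not suffice in general, and one is forced to vary $i$ and, when necessary, to perturb two coefficients at once.

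This is where the two-coefficient perturbations must be brought in, and it is the main obstacle. When no single change works I would pass to $g = f + s x^i + t x^j$ and study $\Delta_{i,j}(s,t) := \Res(g,g')$ as a polynomial in two variables, seeking a lattice point with $|s| + |t| \le 2$ off its zero set; here the admissible points are $(\pm1,0)$, $(0,\pm1)$, $(\pm2,0)$, $(0,\pm2)$ and $(\pm1,\pm1)$. The crux is a uniform non-degeneracy statement: one must rule out, for a suitable choice of $(i,j)$, that the curve $\{\Delta_{i,j}=0\}$ contains the entire $\ell^1$-ball of radius $2$ about $f$. I expect this to require combining (i) the divisibility relation $w \mid x^i$ above, refined to bound how a repeated factor of $f$ can persist under two independent perturbations, with (ii) a counting argument showing the zero set of $\Delta_{i,j}$ cannot absorb all of the admissible small lattice points for every pair $(i,j)$. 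Proving this simultaneous non-vanishing uniformly over all $f \in \Z[x]$ is the hard part --- it is precisely the content that keeps the sharp constant $2$ a conjecture rather than a theorem --- and a complete argument would likely need either an explicit classification of the ``trapped'' polynomials or genuinely new control on the integer points lying on discriminant hypersurfaces.
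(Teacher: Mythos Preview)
The statement you are addressing is labeled \emph{Conjecture} in the paper and is not proved there; the paper's contribution toward it is the much weaker Corollary~\ref{maincorollary}, giving $L(f-g)\le(\ln n)^{2\ln 2+\epsilon}$ via reduction to $\F_2[x]$ and the even/odd decomposition of Lemma~\ref{lemma:binary}. So there is no ``paper's own proof'' to compare against, and any complete argument here would be a new theorem, not a reproduction.

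Your proposal is candid about this: you set up the discriminant/resultant reformulation correctly, carry out the useful observation that $D_i\equiv 0$ forces $x^2\mid f$ and $i\ge 2$ (so $D_0,D_1$ are never identically zero), and then explicitly concede that the two-variable non-vanishing step for $\Delta_{i,j}$ is ``the hard part'' and ``precisely the content that keeps the sharp constant $2$ a conjecture rather than a theorem.'' That concession is accurate, and it means what you have written is a strategy, not a proof. The genuine gap is exactly where you locate it: nothing in the argument rules out the possibility that for some $f$ every $D_i$ with $0\le i\le n$ vanishes at all of $\{-2,-1,1,2\}$ \emph{and} every $\Delta_{i,j}$ vanishes at all lattice points $(s,t)$ with $|s|+|t|\le 2$. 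A nonzero univariate polynomial of degree $\le 2n-1$ can certainly vanish at five prescribed integers, and you offer no mechanism (structural or counting) that prevents this from happening simultaneously for all indices. Until that simultaneous non-vanishing is established, the approach does not yield the conjecture.
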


Among other nice results, Dubickas and Sha \cite[Theorem~1.4]{DS18}  show that if $g(x)$ is allowed to have degree $> n$, then such a squarefree polynomial $g(x)\in\Z[x]$ 
exists satisfying $L(f-g)\le 2$.
They \cite[Theorem~1.3]{DS18} also show that for $n \ge 15$, there are infinitely many polynomials $f(x) \in \mathbb Z[x]$ of degree $n$ such that 
if $g(x) \in \mathbb Z[x]$ is squarefree, then $L(f-g) \ge 2$.  We show in the next section that this latter result extends to $k$-free polynomials.

\begin{theorem}\label{kfree}
Let $k$ be an integer $\ge 2$.  
There exists a computable $N_{0} = N_{0}(k)$ such that if $n \ge N_{0}$, then there are 
infinitely many polynomials $f(x) \in \mathbb Z[x]$ of degree $n$ such that 
if $g(x) \in \mathbb Z[x]$ is $k$-free, then $L(f-g) \ge 2$.
\end{theorem}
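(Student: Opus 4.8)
The plan is to pin down exactly which polynomials lie within $L$-distance $1$ of $f$ and then force each of them to fail to be $k$-free. Since all coefficients are integers, $L(f-g)\le 1$ holds precisely when $g=f$ or $g=f\pm x^{j}$ for some integer $j\ge 0$ (with no restriction on $\deg g$). So the statement is equivalent to the assertion that $f$ itself and every $f\pm x^{j}$ is divisible by the $k$-th power of an irreducible polynomial over $\Q$. The key observation is that imposing $x^{k}\mid f$ dispatches all but finitely many of these infinitely many conditions at once: writing $f=x^{k}h$ with $h\in\Z[x]$, we have $f\pm x^{j}=x^{k}\!\left(h\pm x^{\,j-k}\right)$ for every $j\ge k$, so each such polynomial (and $f$ itself) is divisible by $x^{k}$ and hence not $k$-free; note this covers $j>\deg f$ as well.

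It remains to treat $0\le j\le k-1$. For these, $f\pm x^{j}=x^{j}\!\left(x^{\,k-j}h\pm 1\right)$, and since $x\nmid\left(x^{\,k-j}h\pm 1\right)$, the polynomial $f\pm x^{j}$ fails to be $k$-free exactly when $x^{m}h+\epsilon$ fails to be $k$-free, where $m=k-j\in\{1,\dots,k\}$ and $\epsilon=\pm1$. This is a finite list of $2k$ conditions. I would force each by prescribing a repeated factor: choose, for each pair $(m,\epsilon)$, a monic irreducible $r_{m,\epsilon}\in\Z[x]$ with constant term $\pm1$, and require
\[
x^{m}h+\epsilon\equiv 0 \pmod{r_{m,\epsilon}(x)^{k}}.
\]
Because $r_{m,\epsilon}(0)=\pm1$, the image of $x$ in $\Z[x]/(r_{m,\epsilon}^{k})$ is a unit: one inverts $x$ modulo $r_{m,\epsilon}$ explicitly and then lifts over the nilpotent ideal $(r_{m,\epsilon})/(r_{m,\epsilon}^{k})$ by a finite geometric series. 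Hence the congruence determines an integral residue $h\equiv -\epsilon\,x^{-m}\pmod{r_{m,\epsilon}^{k}}$.

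To produce a single $h\in\Z[x]$ satisfying all $2k$ congruences I would invoke the Chinese Remainder Theorem in $\Z[x]$, which requires the moduli $r_{m,\epsilon}^{k}$ to be pairwise comaximal. I can arrange this directly by induction: given $r_{1},\dots,r_{t}$ pairwise comaximal with constant term $\pm1$, take $r_{t+1}=1+\big(\prod_{i\le t}r_{i}\big)w$ for a suitable $w\in\Z[x]$ making $r_{t+1}$ irreducible with constant term $\pm1$; then $r_{t+1}\equiv 1\pmod{r_{i}}$, so $\Res(r_{t+1},r_{i})=\pm\!\!\prod_{r_{i}(\beta)=0}\!\!r_{t+1}(\beta)=\pm1$, and since the resultant lies in the ideal $(r_{t+1},r_{i})$ these are comaximal, as are their $k$-th powers. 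With $2k$ such irreducibles in hand, CRT yields $h_{0}\in\Z[x]$ solving every congruence, and the full solution set is $h\equiv h_{0}\pmod{\prod_{m,\epsilon}r_{m,\epsilon}^{k}}$. Writing $D=k\sum_{m,\epsilon}\deg r_{m,\epsilon}$ and $N_{0}(k)=k+D$, for each $n\ge N_{0}(k)$ I may choose $h$ of degree $n-k$ in this class so that $f=x^{k}h$ has degree $n$; the class contains infinitely many integer polynomials of degree $n-k$, giving infinitely many admissible $f$, all computable.

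The genuinely clever step, and the one I would be most careful to get right, is the reduction in the first two paragraphs: the single factor $x^{k}$ must simultaneously kill every $j\ge k$ (including the higher-degree perturbations with $j>\deg f$), leaving only the finite batch $0\le j\le k-1$ to be handled by engineered repeated factors. The remaining technical point is the integral Chinese Remainder step; I expect the comaximality construction above to dissolve it, but one must still verify that $w$ can be chosen to keep each $r_{t+1}$ irreducible (for instance by an Eisenstein or dominant-leading-coefficient argument) and that the degree bookkeeping yields the stated computable $N_{0}(k)$.
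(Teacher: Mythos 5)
Your skeleton is, in substance, the paper's own proof: both arguments reduce the theorem to making $f$ and every $f\pm x^{j}$ non-$k$-free, dispose of all $j\ge k$ (and of $f$ itself) by forcing $x^{k}\mid f$, and then impose the remaining $2k$ conditions as congruences modulo $k$-th powers of irreducibles with unit constant term, glued together by a Chinese Remainder argument; even the closing degree/infinitude bookkeeping matches (the paper's $N_{0}=k\sum_{j}(p_{j}-1)+k+1$ is your $k+D$ up to an additive constant, and the paper's $g(x)+x^{n-N-1}P(x)(ax+b)$ plays the role of your residue class $h\equiv h_{0}$). The one real difference is where the pairwise comaximal irreducibles come from: the paper takes them to be cyclotomic polynomials $\Phi_{p_{1}},\dots,\Phi_{p_{2k}}$ for distinct primes, getting comaximality and the invertibility of $x$ from $\Res(\Phi_{p},\Phi_{q})=\pm 1$ and $\Phi_{p}(0)=1$, whereas you manufacture them inductively as $r_{t+1}=1+Qw$ with $Q=\prod_{i\le t}r_{i}$.

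That manufacturing step is the one genuine gap in your write-up: the irreducibility of $1+Qw$ is asserted, not proved, and the tools you name do not apply as stated. Eisenstein's criterion requires the prime to divide the constant term, which you have pinned to $\pm 1$, and Perron-type dominant-coefficient criteria conflict with the two rigid constraints $r\equiv 1\pmod{Q}$ and $r(0)=\pm 1$ (the latter essentially forces $w(0)=0$, after which the low-order coefficients of $r$ are dictated by those of $Q$). The gap is fillable, however, by a reciprocal twist on your own suggestion: take $w(x)=px$ with $p$ a prime not dividing the leading coefficient of $Q$. Then $r=1+pxQ(x)$ has constant term $1$, one has $x\cdot(-pQ(x))\equiv 1\pmod{r}$, comaximality with each $r_{i}$ is immediate from $r\equiv 1\pmod{r_{i}}$ (no resultants needed, which is good, since these $r$'s are no longer monic), and irreducibility follows by applying Eisenstein at $p$ to the reciprocal polynomial $x^{\deg Q+1}+pQ^{*}(x)$, whose constant term is $p$ times the leading coefficient of $Q$, hence divisible by $p$ but not $p^{2}$; irreducibility passes back from the reciprocal to $r$ because $r(0)\ne 0$. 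Alternatively, simply take the $r_{m,\epsilon}$ to be the paper's cyclotomic polynomials $\Phi_{p}$, whose pairwise resultants are $\pm 1$. With either repair, your argument is complete and correct.
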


Our argument for Theorem~\ref{kfree} gives as a permissible value of $N_{0}$ the number
\[
N_{0} = k \sum_{j=1}^{2k} (p_{j}-1) + k + 1,
\]
where $p_{1}, \ldots, p_{2k}$ are the first $2k$ primes.  We expect much smaller $N_{0}$ will suffice.  

One can approach the above conjectures by investigating the analogous questions for polynomials over finite fields.  Indeed, this is done for Conjecture~\ref{conj1pt1} in
\cite{BH97,F14,FM12,LRW07,M10}.  

\begin{definition}
Let $\F_p$ be any finite field with $p$ elements where $p$ is a prime. 
For any polynomial $f(x)\in\Fpx$, define its {\it length} $L_p(f)$ by choosing each of its coefficients in the interval $(-p\slash 2, p\slash 2]$ 
and then summing their absolute values in $\Z$.
\end{definition}

Using this definition of distance in $\Fpx$, Dubickas and Sha \cite[Question 6.2]{DS18} asked the following question.

\begin{question}
For any prime number $p$ and any polynomial $f(x)\in\F_p[x]$, is there a squarefree polynomial $g(x)\in\F_p[x]$ of degree at most $\deg f$ satisfying $L_p(f-g)\le 2$?
\end{question}

In this paper, we will prove the following theorem.

\begin{theorem}\label{thm:main1} 
Fix $\epsilon>0$. 
Let $f(x) \in \Fx$ with $\deg f = n$. 
If $n$ is sufficiently large, then there exists a squarefree polynomial $g(x) \in \Fx$ of degree $n$ such that 
\[
L_2(f-g) \le (\ln n)^{2\ln(2)+\epsilon}.
\]
\end{theorem}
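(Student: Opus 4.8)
The plan is to fix a radius $r=\lfloor(\ln n)^{2\ln(2)+\epsilon}\rfloor$ and to produce the squarefree $g$ as a perturbation $g=f+\delta$, where $\delta\in\Fx$ has degree $<n$ (so that $\deg g=n$ is preserved) and Hamming weight at most $r$; since every coefficient lies in $\{0,1\}$, $L_2(f-g)$ is exactly this weight. Over $\Fx$ a polynomial is squarefree precisely when it is divisible by the square of no irreducible, equivalently when $\gcd(g,g')=1$; writing $g=\sum a_ix^i$ one has $g'=H(x)^2$ with $H(x)=\sum_m a_{2m+1}x^m$, so $g$ is squarefree iff $\gcd(g,H)=1$. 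I would work with the first formulation and estimate, by a first-moment (counting) argument, the number of $\delta$ of weight at most $r$ for which $f+\delta$ fails to be squarefree, aiming to show it is strictly smaller than the size $B:=\sum_{i=0}^r\binom{n}{i}$ of the Hamming ball; any $\delta$ outside the bad set then yields the desired $g$.

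The main term is organized by the degree $d$ of the offending irreducible $p$. For a fixed $p$ one has $p^2\mid f+\delta$ iff $\delta\equiv f\pmod{p^2}$, so I must bound $N_p:=\#\{\delta:\operatorname{wt}(\delta)\le r,\ \delta\equiv f\ (\mathrm{mod}\ p^2)\}$. First I would treat the small primes, those with $2d\le r$: here the residues $x^j\bmod p^2$ are equidistributed enough (via a character-sum estimate, with the positions $j$ ranging over $\{0,\dots,n-1\}$ controlling the bias) that $N_p\le(1+o(1))\,2^{-2d}B$ for all but the few degenerate primes dividing $x^k-1$ for small $k$ (those where $x$ has small multiplicative order modulo $p^2$). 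Summing the main contribution over all irreducibles gives $\sum_p 2^{-2d}\le\sum_{d\ge1}\tfrac{1}{d}\,2^{d}2^{-2d}=\ln 2<1$, which reflects the fact that the density of squarefree polynomials over $\Fx$ is $\tfrac12$; the degenerate primes, being few, are absorbed into the error.

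The hard part will be the large primes, those with $2d>r$, for which there are too few perturbation terms to equidistribute modulo $p^2$. Here I would argue in two directions. The square factors already dividing $f$ are harmless: if $p^2\mid f$ with $\deg p>r/2$, then keeping $p^2\mid f+\delta$ forces $p^2\mid\delta$, and for such $p$ the ideal $(p^2)$ has no nonzero multiple of weight at most $r$ (a fact I would establish by counting low-weight multiples of $p^2$), so only $\delta=0$ preserves it. The genuine obstacle is to bound the number of $\delta$ that \emph{create} a square factor of large degree in $f+\delta$, that is, to control $\#\{g:\operatorname{dist}(g,f)\le r,\ p^2\mid g\text{ for some }\deg p>r/2\}$. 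A union bound over all such $p$ is lossy, because a single polynomial may possess many large square factors; the crux is therefore a clustering estimate showing that the globally sparse set of polynomials with a large square factor cannot concentrate inside a single Hamming ball of radius $r$. Balancing the degree threshold $\tfrac r2$ against the ball size $B$ in this estimate is exactly what should pin the exponent at $2\ln(2)+\epsilon$, and I expect this balancing, together with the uniform control of the bias for primes of intermediate degree near $\log_2 n$, to be the principal technical difficulty; once it is in place, the first-moment bound $\#\{\text{bad }\delta\}<B$ produces a squarefree $g$ within the required distance.
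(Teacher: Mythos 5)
Your proposal is a program rather than a proof: the two steps on which everything hinges are both left as acknowledged open difficulties, and neither is routine. The most serious is the large-prime case. You correctly observe that a union bound over irreducibles $p$ with $\deg p > r/2$ is hopeless (there are exponentially many of them, each possibly contributing a multiple of $p^2$ to the ball), and you replace it with a ``clustering estimate'' asserting that polynomials with a large square factor cannot concentrate in a Hamming ball of radius $r$. But that assertion \emph{is} the theorem, in only slightly disguised form: the statement to be proved is precisely that non-squarefree polynomials do not fill up any Hamming ball of radius $(\ln n)^{2\ln(2)+\epsilon}$, and the large square factors are exactly the part of non-squarefreeness that local/equidistribution arguments cannot see. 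No tool is proposed for this step, and saying ``once it is in place, the first-moment bound produces a squarefree $g$'' is circular. The small-prime case also conceals a real obstruction: for irreducibles $p$ of degree $d$ near $\log_2 n$, the multiplicative order of $x$ modulo $p^2$ can be of size about $2^{2d}\approx n^2$, so the positions $x^j$, $0\le j<n$, run over less than the square root of a period; the character sums over the ball are then incomplete sums in the classical short-range regime, where Weil-type complete-sum cancellation gives nothing. Moreover, your ``few degenerate primes'' (those with $x$ of small order modulo $p^2$) are not obviously few in the sense needed: the number of degree-$d$ irreducibles whose root has order at most $E$ can be as large as roughly $E^2/d$, and for these primes you offer no substitute bound on $N_p$ at all.

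It is worth contrasting this with how the paper sidesteps every one of these issues. Rather than perturbing $f$ by an arbitrary low-weight $\delta$ and counting failures, the paper uses the decomposition $f=f_e^2+xf_o^2$ in $\Fx$ (which you mention, via $g'=H^2$, but then abandon): by Lemma~\ref{lemma:binary}, squarefreeness of $g=\tilde{f}^2+x\tilde{g}^2$ is equivalent to $\gcd(\tilde{f},\tilde{g})=1$, a coprimality condition between two polynomials of half the degree. The perturbations are then \emph{structured}, not arbitrary: $f_e$ is shifted by multiples $a(x)P(x)$ of the product $P$ of all small-degree irreducibles not dividing it (killing all small irreducible factors at once, and producing $t+1$ pairwise coprime candidates), while $f_o$ is adjusted only in its coefficients below degree $\log_2 n$, with an elementary divisor count plus a pigeonhole over the $t+1$ candidates replacing any equidistribution or clustering input. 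Everything reduces to counting how many of $n$ explicit candidates an irreducible of given degree can divide, which is where the exponent $2\ln 2$ comes from ($\deg P\approx 2^{t+1}$ with $t\approx 2\ln\log_2 n$). If you want to salvage your approach, the lesson is that the unstructured Hamming-ball first moment is the wrong frame; restricting to perturbations of the special form $a(x)P(x)$ on the even part and low-degree changes on the odd part converts both of your hard steps into finite, elementary counts.
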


In the next section, we justify the following consequence of Theorem \ref{thm:main1}.

\begin{corollary}\label{maincorollary} 
Fix $\epsilon>0$. Let $f(x)\in\Zx$ with $\deg f = n$. 
If $n$ is sufficiently large, then there exists a squarefree polynomial $g(x)\in\Zx$ of degree $n$ such that 
\[
L(f-g) \le (\ln n)^{2\ln(2)+\epsilon}.
\]
\end{corollary}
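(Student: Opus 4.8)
The plan is to transfer Theorem~\ref{thm:main1} from $\Fx$ to $\Zx$ by reduction modulo $2$. The underlying elementary observation is the following: if $g(x)\in\Zx$ has reduction $\bar g(x)\in\Fx$ with $\deg\bar g=\deg g$ and $\bar g$ squarefree in $\Fx$, then $g$ is squarefree over $\Q$. Indeed, were $g$ divisible by $h^2$ for some nonconstant irreducible $h\in\Q[x]$, then by Gauss's Lemma one may take $h\in\Zx$ primitive and write $g=h^2q$ with $q\in\Zx$; comparing leading coefficients and using $\deg\bar g=\deg g$ forces $2\nmid\operatorname{lead}(h)$, so $\bar h$ is nonconstant and $\bar h^2\mid\bar g$, contradicting squarefreeness of $\bar g$. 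Thus it suffices to produce $g$ whose reduction mod $2$ is squarefree of degree $n$.

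Write $f(x)=\sum_{j=0}^n a_jx^j$ with $a_n\ne 0$. First I would adjust the leading coefficient so that reduction mod $2$ preserves the degree: set $f_1=f+\delta x^n$, where $\delta\in\{0,1\}$ is chosen so that $a_n+\delta$ is odd. Then $f_1\in\Zx$ has degree $n$, its reduction $\bar{f_1}\in\Fx$ has degree $n$, and $L(f-f_1)=\delta\le 1$. Applying Theorem~\ref{thm:main1} to $\bar{f_1}$ (with $\epsilon$ replaced by $\epsilon/2$) yields, for $n$ large, a squarefree $\bar g\in\Fx$ of degree $n$ with $L_2(\bar{f_1}-\bar g)\le(\ln n)^{2\ln(2)+\epsilon/2}$. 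Since $\bar{f_1}$ and $\bar g$ both have degree $n$, they agree in their leading coefficients, so they differ only among the coefficients of index $<n$.

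Next I would lift $\bar g$ back to $\Zx$. Let $S=\{\,j<n:\ \text{the degree-}j\text{ coefficients of }\bar{f_1}\text{ and }\bar g\text{ differ}\,\}$, and define $g\in\Zx$ from $f_1$ by changing, for each $j\in S$, the coefficient $a_j$ by $\pm 1$ so as to match the parity prescribed by $\bar g$, while leaving the leading coefficient $a_n+\delta$ unchanged. By construction $g$ has degree $n$, its reduction mod $2$ is exactly $\bar g$ (squarefree of degree $n$), and hence $g$ is squarefree over $\Q$ by the observation of the first paragraph. Moreover each change contributes exactly $1$ to the length and $\bar{f_1}-\bar g$ is supported on $S$ with all coefficients equal to $1$ in $\F_2$, so $L(f_1-g)=|S|=L_2(\bar{f_1}-\bar g)$.

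Finally, the triangle inequality gives
\[
L(f-g)\le L(f-f_1)+L(f_1-g)\le 1+(\ln n)^{2\ln(2)+\epsilon/2}\le(\ln n)^{2\ln(2)+\epsilon}
\]
for all sufficiently large $n$, since $(\ln n)^{\epsilon/2}\to\infty$ absorbs both the additive constant and the gap in the exponent. I expect the only point genuinely requiring care—rather than a true obstacle—to be the degree bookkeeping together with the squarefree-descent observation of the first paragraph; once those are in place, the statement is a direct translation of the finite-field result.
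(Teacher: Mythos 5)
Your proposal is correct and follows essentially the same route as the paper's own proof: adjust the leading coefficient of $f$ by at most $1$ so it is odd, apply Theorem~\ref{thm:main1} with $\epsilon/2$ to the reduction mod $2$, lift the resulting squarefree polynomial back to $\Zx$ by unit coefficient changes, and absorb the extra $+1$ via $(\ln n)^{\epsilon/2}\to\infty$. The only difference is that you spell out, via Gauss's Lemma, the descent fact that a degree-preserving lift of a squarefree polynomial in $\Fx$ is squarefree over $\Q$ --- a step the paper uses implicitly --- which is a welcome addition rather than a change of method.
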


\section{Proofs of Theorem~\ref{kfree} and Corollary~\ref{maincorollary}}

Before turning to our main result, we establish Theorem~\ref{kfree} and show that Corollary~\ref{maincorollary}
is a consequence of Theorem \ref{thm:main1}.

\begin{proof}[Proof of Theorem~\ref{kfree}]
Fix a positive integer $k$.  
Let $\Phi_{n}(x)$ denote the $n$th cyclotomic polynomial.  
For distinct positive integers $m$ and $n$, 
Diederichsen \cite{FED1940} obtained the value of the resultant $\Res (\Phi_{n}(x), \Phi_{m}(x))$.  
For our purposes, we only use that this resultant is $1$ in the case that $m$ and $n$ are distinct primes.
For monic polynomials $f(x)$ and $g(x)$, one can view the $|\Res (f(x), g(x))|$ as the product
of $g(\alpha)$ as $\alpha$ runs through the roots of $f(x)$.  
It follows that for distinct primes $p$ and $q$, we have 
\[
\Res(\Phi_{p}(x)^{k}, \Phi_{q}(x)^{k}) =  \pm 1.
\]
Furthermore, for any prime $p$, one can see that
\[
\Res(x^{k}, \Phi_{p}(x)^{k}) = \pm 1.
\]
Both of the above resultants hold with $\pm 1$ replaced by $1$, but this is not important to us.

Let $p_{1}, p_{2}, \ldots, p_{2k}$ be arbitrary distinct primes.  
Define 
\[
f_{0}(x) = x^{k} 
\quad \text{ and } \quad
f_{j}(x) = \Phi_{p_{j}}(x)^{k} \ \ \text{for } 1 \le j \le 2k.
\]
From the above, we have $\Res(f_{i}(x), f_{j}(x)) =  \pm 1$ for distinct $i$ and $j$ in $\{ 0, 1, \ldots, 2k \}$.  
The significance of this is that as a consequence each $f_{i}(x)$ has an inverse modulo $f_{j}(x)$ in $\mathbb Z[x]$.  
Thus, a Chinese Remainder Theorem argument implies that for arbitrary $a_{j}(x) \in \mathbb Z[x]$, there is a 
$g(x) \in \mathbb Z[x]$ that satisfies
\[
g(x) \equiv a_{j}(x) \pmod{f_{j}(x)}, \quad \text{for all } j \in \{ 0, 1, \ldots, 2k \}.
\]

We set
\[
a_{0} = 0
\quad \text{ and } \quad
a_{j}(x) = (-1)^{j} x^{\lfloor (j-1)/2 \rfloor} \ \text{ for } 1 \le j \le 2k.
\]
Then $g(x)$ above has the property that $g(x) - (-1)^{j} x^{\lfloor (j-1)/2 \rfloor}$ is divisible by $f_{j}(x) = \Phi_{p_{j}}(x)^{k}$ 
for $1 \le j \le 2k$.  Furthermore, for any $\ell \ge k$, the condition $a_{0} = 0$ implies $g(x)$ and 
$g(x) \pm x^{\ell}$ are divisible by $x^{k}$.  
Taking $N$ equal to the degree of
\[
P(x) = \prod_{1 \le j \le 2k} \Phi_{p_{j}}(x)^k,
\]
we can find $g(x)$ as above of degree $< N + k$.  
Then for $n \ge N_{0} := N+k+1$ and arbitrary integers $a$ and $b$, the polynomial
\[
F(x) = g(x) + x^{n-N-1} P(x) (ax + b)
\]
of degree $n$ has the property that if $h(x) \in \mathbb Z[x]$ and $L(F-h) \le 1$, then $h(x)$ is divisible by one of
the $f_{j}(x)$ and, hence, not $k$-free.  The role of the expression $ax + b$ in the definition of $F(x)$ 
is to clarify that for a given $n \ge N_{0}$, there are infinitely many possibilities for $F(x)$, 
completing the proof of Theorem~\ref{kfree}.
\end{proof}

\begin{proof}[Proof of Corollary~\ref{maincorollary} assuming Theorem~\ref{thm:main1}]
We consider $\epsilon > 0$ and $n$ sufficiently large.  
Let $f_{2}(x) = f(x)$ if the leading coefficient of $f(x)$ is odd; otherwise, let 
$f_{2}(x) = f(x) + x^{n}$.  Thus, in either case, $f_{2}(x)$ has degree $n$ and an odd leading coefficient. 
Let $\bar{f}_{2}(x)$ be a $0,1$-polynomial (a polynomial all of whose coefficients are $0$ or $1$) satisfying 
$\bar{f}_{2}(x) \equiv f_{2}(x) \pmod{2}$.   
By Theorem~\ref{thm:main1}, there is a $0,1$-polynomial $\bar{g}_{2}(x)$, squarefree in $\Fx$, 
such that 
\[
L(\bar{f}_{2}-\bar{g}_{2}) = L_2(\bar{f}_{2}-\bar{g}_{2})<(\ln n)^{2\ln(2)+\epsilon/2}.
\]
Furthermore, $\bar{g}_{2}(x)$ has degree $n$ and, hence, an odd leading coefficient of $1$.  
Observe that there is a $g_{2}(x) \in \Zx$ with $g_{2}(x) \equiv \bar{g}_{2}(x) \pmod{2}$ and with each coefficient of
$f_{2}(x) - g_{2}(x)$ in $\{ 0,1 \}$.  In particular, $g_{2}(x)$ has degree $n$, and we see that 
\[
L(f-g_{2}) \le 1 + L(f_{2}-g_{2}) = 1 + L(\bar{f}_{2}-\bar{g}_{2}) \le 1 + (\ln n)^{2\ln(2)+\epsilon/2} \le (\ln n)^{2\ln(2)+\epsilon},
\] 
completing the proof.
\end{proof}

\section{Preliminaries to Theorem~\ref{thm:main1}}
Unless stated otherwise, we restrict our attention to arithmetic over $\F_{2}$, the field with two elements.  
In addition to the notation discussed in the previous section, we define the degree of a $0$ polynomial to be $-\infty$ with the understanding that
$\deg 0 = -\infty < \deg w$ for non-zero $w(x) \in \F_{2}[x]$.  

Our approach to proving Theorems \ref{thm:main1} relies on the following idea from \cite{DS18}. 
If $f(x)=\sum_{i=0}^{n}{a_ix^i}\in\Fx$ has degree $n$, then we define
\[
f_e(x)=\sum_{i=0}^{\lfloor n\slash 2\rfloor}{a_{2i}x^i}
\quad \text{ and } \quad
f_o(x)=\sum_{i=0}^{\lfloor(n-1)\slash 2 \rfloor}{a_{2i+1}x^i}.
\]
Observe that $f(x)=(f_e(x))^2+x(f_o(x))^2$. Further observe that $f'(x)=(f_o(x))^2$. 
As noted in \cite[Lemma~5.1]{DS18}, we have the following lemma.

\begin{lemma}\label{lemma:binary}
Let $f(x)\in\Fx$ with degree at least $2$.  The polynomial $f(x)$ is squarefree in $\Fx$ 
if and only if $\gcd(f_e(x),f_0(x))=1$.
 Moreover, any irreducible polynomial appearing as a factor of $f(x)$ to a multiplicity $> 1$ is a factor of the polynomial $\gcd(f_e(x),f_o(x))$.
\end{lemma}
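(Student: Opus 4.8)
The plan is to exploit the factorization $f(x) = (f_e(x))^2 + x(f_o(x))^2$ over $\F_2$ together with the Frobenius identity that squaring is additive. The crucial observation is that over $\F_2$ the Frobenius map $w(x) \mapsto w(x)^2$ is a ring homomorphism, so every coefficient of an even power appears as a square and every coefficient of an odd power appears with an extra factor of $x$; this is precisely what lets us split $f$ into its even-indexed and odd-indexed parts.

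First I would verify the identity $f(x) = (f_e(x))^2 + x(f_o(x))^2$ by direct computation. Writing $f(x) = \sum_{i=0}^n a_i x^i$ and separating the sum according to the parity of $i$, the even-index terms $\sum a_{2i} x^{2i}$ equal $(\sum a_{2i} x^i)^2 = (f_e(x))^2$ because over $\F_2$ we have $(\sum c_i x^i)^2 = \sum c_i^2 x^{2i} = \sum c_i x^{2i}$, using $c_i^2 = c_i$ for $c_i \in \F_2$. Similarly the odd-index terms $\sum a_{2i+1} x^{2i+1} = x \sum a_{2i+1} x^{2i} = x (f_o(x))^2$. Differentiating $f(x) = (f_e(x))^2 + x(f_o(x))^2$ and using that the derivative of any square vanishes in characteristic $2$, together with the product rule on $x(f_o(x))^2$, yields $f'(x) = (f_o(x))^2$.

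Next I would establish the squarefree criterion. Recall that in any field, $f$ is squarefree if and only if $\gcd(f, f') = 1$. Using $f = (f_e)^2 + x(f_o)^2$ and $f' = (f_o)^2$, I would argue that $\gcd(f, f')$ and $\gcd(f_e, f_o)$ capture the same repeated factors. If an irreducible $p(x)$ divides both $f_e$ and $f_o$, then $p^2$ divides $(f_e)^2$ and $p^2$ divides $x(f_o)^2$, so $p^2 \mid f$, giving a repeated factor. Conversely, if $p \mid \gcd(f, f') = \gcd(f, (f_o)^2)$, then $p \mid f_o$ (as $p$ is irreducible dividing $(f_o)^2$), and since $p \mid f = (f_e)^2 + x(f_o)^2$ it follows that $p \mid (f_e)^2$, hence $p \mid f_e$; thus $p \mid \gcd(f_e, f_o)$. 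This chain of implications shows simultaneously that $f$ is squarefree iff $\gcd(f_e, f_o) = 1$ and that every repeated irreducible factor of $f$ divides $\gcd(f_e, f_o)$, which is exactly the ``moreover'' clause.

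I expect the main subtlety to be bookkeeping the relationship between $\gcd(f, f')$ and $\gcd(f_e, f_o)$ cleanly in both directions, since one must pass between the squared quantities $(f_e)^2, (f_o)^2$ and the polynomials $f_e, f_o$ themselves; the key enabling fact is that an irreducible polynomial divides a square if and only if it divides the base. The degree hypothesis $\deg f \ge 2$ ensures $f$ actually admits the even/odd decomposition with $f_o$ nonzero in the interesting cases and that the notion of a repeated factor is meaningful.
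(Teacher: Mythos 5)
Your proof is correct. There is, in fact, no in-paper argument to compare it against: the paper quotes this lemma from Dubickas and Sha (it is their Lemma~5.1) and supplies no proof, only the preparatory identities $f = f_e^2 + x f_o^2$ and $f' = f_o^2$, which you verify and then use in exactly the way one would expect. Your two directions are sound: a common irreducible factor $p$ of $f_e$ and $f_o$ satisfies $p^2 \mid f_e^2$ and $p^2 \mid x f_o^2$, hence $p^2 \mid f$; conversely, a repeated irreducible factor $p$ of $f$ divides both $f$ and $f' = f_o^2$, hence divides $f_o$, hence divides $f - x f_o^2 = f_e^2$ and so $f_e$, which gives both the equivalence and the ``moreover'' clause at once. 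One small caution: your blanket claim that ``in any field, $f$ is squarefree if and only if $\gcd(f,f')=1$'' is false over imperfect fields (e.g., $x^p - t$ over $\F_p(t)$ is irreducible yet has zero derivative); however, the only direction you actually use is the one valid over every field, namely that $p^2 \mid f$ forces $p \mid \gcd(f, f')$, so your argument is unaffected --- and over $\F_2$, which is perfect, the full equivalence is true in any case.
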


This lemma will be crucial to our result. Observe that Lemma \ref{lemma:binary} allows one to view a polynomial $f(x)\in\Fx$ of degree $n$ as an ordered pair of polynomials of degree at most $n\slash 2$. Finding a nearby squarefree polynomial of degree $n$ is tantamount to finding a nearby ordered pair of polynomials which have trivial gcd.

We also make use of the following result.

\begin{lemma}\label{lemma:irr_prod}
Let $n \in \mathbb Z^{+}$, and let $p$ be a prime.   
The degree of the product of the monic irreducible polynomials of degree $\le n$ in $\Fpx$ is less than or equal to ${p(p^{n}-1)}/{(p-1)}$.
\end{lemma}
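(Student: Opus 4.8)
The plan is to reduce the statement to a clean count of irreducible polynomials by degree and then to invoke the classical factorization of $x^{p^m}-x$. Write $I_d$ for the number of monic irreducible polynomials of degree exactly $d$ in $\Fpx$. The product of all monic irreducible polynomials of degree $\le n$ then has degree exactly
\[
D = \sum_{d=1}^{n} d\, I_d,
\]
and the task is to show $D \le p(p^n-1)/(p-1)$.

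First I would recall the standard identity that, for each positive integer $m$, the polynomial $x^{p^m}-x$ factors over $\F_p$ as the product of \emph{all} monic irreducible polynomials whose degree divides $m$. Comparing degrees on the two sides yields
\[
\sum_{d \mid m} d\, I_d = p^m.
\]
Since every summand on the left is nonnegative, the single term indexed by $d=m$ is bounded by the entire sum, which gives the key inequality $m\, I_m \le p^m$ for every $m \ge 1$; equivalently $d\, I_d \le p^d$ for all $d$. (One may also phrase this as $I_d \le p^d/d$.)

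With this in hand the bound is immediate: summing over $d$ from $1$ to $n$ and applying the geometric series formula,
\[
D = \sum_{d=1}^{n} d\, I_d \le \sum_{d=1}^{n} p^d = \frac{p^{n+1}-p}{p-1} = \frac{p(p^n-1)}{p-1},
\]
which is exactly the claimed estimate.

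I do not expect a serious obstacle here; the only real input is the factorization of $x^{p^m}-x$ into the monic irreducibles of degree dividing $m$, which is standard. The one point worth flagging is that the bound is deliberately wasteful: to pass from $\sum_{d \mid m} d\, I_d = p^m$ to $d\, I_d \le p^d$ we discard all the terms $e\, I_e$ with $e \mid d$ and $e < d$, and then we sum over \emph{all} $d \le n$ rather than only the divisors of a fixed $m$. This looseness is harmless for the intended application, where only an upper bound of the stated shape is required.
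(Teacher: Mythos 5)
Your proof is correct and follows essentially the same route as the paper: both rest on the fact that the monic irreducibles of degree $d$ divide $x^{p^d}-x$ (giving $d\,I_d \le p^d$) and then sum the geometric series $\sum_{d=1}^{n} p^d = p(p^n-1)/(p-1)$. The only cosmetic difference is that you invoke the full factorization identity $\sum_{d \mid m} d\,I_d = p^m$ and discard terms, whereas the paper uses the divisibility statement directly; the substance is identical.
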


\begin{proof}
Every irreducible polynomial in $\Fpx$ of degree $n$ divides $x^{p^{n}} - x$.  
Hence, the degree of the product of the monic irreducible polynomials of degree $n$ is less than or equal to $p^n$.  
Since $p+p^{2}+ \cdots + p^{n} = p(p^{n}-1)/(p-1)$, the result follows.
\end{proof}

Next, we bound the minimum distance between a polynomial $f$ and a multiple of a polynomial $d$.

\begin{lemma}\label{lemma:congr}
Let $f(x),d(x)\in\Fx$ with $\deg d > 0$. 
There exists a polynomial $g(x)\in\Fx$ of degree at most $\deg f$ such that $d(x)|g(x)$ and $L_2(f-g) \le \deg d$.  
Furthermore, if also $\deg d \le \deg f$, then one can take $\deg g = \deg f$.  
\end{lemma}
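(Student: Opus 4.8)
The plan is to obtain $g$ directly from the division algorithm in $\Fx$. Dividing $f$ by $d$, I would write $f(x) = q(x) d(x) + r(x)$ with $\deg r < \deg d$, and then set $g(x) = q(x) d(x) = f(x) - r(x)$. By construction $d(x) \mid g(x)$, so the only two things left to verify are the distance bound $L_2(f-g) \le \deg d$ and the degree assertions.

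The distance bound is a matter of counting coefficients. Since $f - g = r$ with $\deg r \le \deg d - 1$, the polynomial $r$ involves only the monomials $1, x, \dots, x^{\deg d - 1}$, so it has at most $\deg d$ coefficients. Over $\F_2$ the length $L_2(r)$ is precisely the number of nonzero coefficients of $r$, whence $L_2(f-g) = L_2(r) \le \deg d$. For the degree claims I would separate two cases. If $\deg f < \deg d$, the division forces $q = 0$ and hence $g = 0$, which has degree $-\infty \le \deg f$; in this case one checks $L_2(f-g) = L_2(f) \le \deg f + 1 \le \deg d$ just as above. If $\deg f \ge \deg d$, then $\deg q = \deg f - \deg d \ge 0$ and the leading term of $qd$ does not cancel, so $\deg g = \deg q + \deg d = \deg f$. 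This last case is exactly the hypothesis $\deg d \le \deg f$, which yields the furthermore statement $\deg g = \deg f$.

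I do not anticipate any genuine obstacle: the entire argument reduces to polynomial division together with the elementary observation that a polynomial of degree less than $\deg d$ can have at most $\deg d$ nonzero coefficients. The only points deserving attention are the coefficient count giving $L_2(r) \le \deg d$ and the verification that the quotient term raises the degree of $g$ to exactly $\deg f$ when $\deg f \ge \deg d$.
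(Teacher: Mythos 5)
Your proof is correct and follows exactly the paper's argument: divide $f$ by $d$, take $g = qd$, and bound $L_2(f-g) = L_2(r)$ by the number of possible nonzero coefficients of $r$. The only difference is that you spell out the degree bookkeeping (including the case $\deg f < \deg d$) more explicitly than the paper does, which is fine.
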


\begin{proof}
There exist polynomials $q(x), r(x)\in\Fx$ such that $f(x)=d(x)q(x)+r(x)$,  $\deg r < \deg d$, and $\deg(d(x)q(x)) \le \deg f$, 
with equality if $\deg d \le \deg f$.
Since 
\[
L_2(f(x)-d(x) q(x))\le \deg d,
\] 
we can take $g(x)=d(x) q(x)$ to complete the proof.
\end{proof}

By taking $g(x) = d(x)q(x) + 1$ in the argument above, we obtain the following.

\begin{lemma}\label{lemma:not_congr}
Let $f(x),d(x)\in\Fx$ with $f(x)$ non-zero and $\deg d > 0$. 
There exists a polynomial $g(x)\in\Fx$ of degree at most $\deg f$ such that $\gcd(d,g)=1$ and $L_2(f-g) \le \deg d$.
Furthermore, if also $\deg d \le \deg f$, then one can take $\deg g = \deg f$.  
\end{lemma}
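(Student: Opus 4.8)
The plan is to reuse the division-algorithm setup from the proof of Lemma~\ref{lemma:congr}, changing only the final choice of $g$. First I would apply the division algorithm in $\Fx$ to write $f(x) = d(x)q(x) + r(x)$ with $\deg r < \deg d$ and $\deg(d(x)q(x)) \le \deg f$, the latter with equality when $\deg d \le \deg f$, exactly as in the earlier lemma. The new idea, signalled by the remark preceding the statement, is to set
\[
g(x) = d(x)q(x) + 1
\]
rather than $g(x) = d(x)q(x)$, so that $g$ becomes coprime to $d$ instead of divisible by it.

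The coprimality is then immediate: since $g(x) \equiv 1 \pmod{d(x)}$, any common divisor of $d$ and $g$ divides $1$, whence $\gcd(d,g) = 1$. For the length bound I would observe that $f(x) - g(x) = r(x) - 1 = r(x) + 1$ in $\Fx$. Because $\deg r < \deg d$, every nonzero coefficient of $r(x)+1$ sits among the $\deg d$ positions $x^0, x^1, \dots, x^{\deg d - 1}$, so $L_2(f-g) = L_2(r+1) \le \deg d$. For the degree claim, adding the constant $1$ cannot raise the degree past $\max(\deg(dq),0)$, which is at most $\deg f$ since $f \neq 0$ forces $\deg f \ge 0$ and $\deg(dq) \le \deg f$; hence $\deg g \le \deg f$. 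When moreover $\deg d \le \deg f$, Lemma~\ref{lemma:congr} gives $\deg(dq) = \deg f$, and $\deg d \ge 1$ forces $\deg f \ge 1$, so the leading term of $d(x)q(x)$ is untouched by the $+1$ and $\deg g = \deg f$.

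I expect the only genuinely delicate point to be the length estimate, where one must confirm that replacing $g = dq$ by $g = dq + 1$ does not push $L_2(f-g)$ above $\deg d$. This is safe because over $\F_2$ the modification alters only the constant coefficient, so the number of nonzero coefficients of $r + 1$ can never exceed the $\deg d$ available low-order slots. The one case worth isolating is $\deg f < \deg d$, where the division yields $q = 0$ and hence $g = 1$; here $\gcd(d,1) = 1$ trivially, $\deg g = 0 \le \deg f$, and $L_2(f-1) \le \deg d$ by the same counting, so no separate argument is needed.
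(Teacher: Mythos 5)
Your proposal is correct and is exactly the paper's argument: the paper proves this lemma by the single remark that one takes $g(x) = d(x)q(x) + 1$ in the proof of Lemma~\ref{lemma:congr}, which is precisely your construction, and your verification of the coprimality, length, and degree claims (including the $\deg f < \deg d$ case) is sound.
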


Here is another lemma that will prove useful later.

\begin{lemma}\label{lemma:cyclotomic}
For $t$ a positive integer, set $\Pi_{1} = \prod_{i=1}^{t}{(x^i+1)}\in\Fx$,
and let $\tilde{\Pi}_{1}$ be the product of the distinct irreducible polynomials dividing $\Pi_{1}$. 
The degree of $\tilde{\Pi}_{1}$ is $\le \lceil t/2 \rceil^2 - \lceil t/2 \rceil + 1$.
\end{lemma}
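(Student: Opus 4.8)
The plan is to exploit the characteristic-$2$ structure of $\Pi_{1}$ to identify its distinct irreducible factors with the irreducible factors of a family of cyclotomic polynomials of odd index, and then to bound the total degree by a sum of values of Euler's totient function.

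First I would reduce to odd exponents. For $1 \le i \le t$, write $i = 2^{a} m$ with $m$ odd. Since we are in characteristic $2$, the Frobenius gives $x^{i}+1 = (x^{m}+1)^{2^{a}}$, so every irreducible factor of $x^{i}+1$ already divides $x^{m}+1$. Hence any irreducible $\pi(x)$ dividing $\Pi_{1}$ divides $x^{m}+1 = x^{m}-1$ for some odd $m \le t$. Appealing to the factorization $x^{m}-1 = \prod_{d \mid m}\Phi_{d}(x)$ (valid over $\Z$, and hence over $\F_{2}$ after reduction), and noting that every divisor $d$ of an odd $m \le t$ is itself odd and at most $t$, I would conclude that $\pi(x)$ divides $\prod_{d \le t,\, d \text{ odd}}\Phi_{d}(x)$.

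Next I would assemble these factors. Because $\tilde{\Pi}_{1}$ is by definition a product of distinct, hence pairwise coprime, irreducible polynomials, and each of them divides $\prod_{d \le t,\, d \text{ odd}}\Phi_{d}$, it follows that $\tilde{\Pi}_{1} \mid \prod_{d \le t,\, d \text{ odd}}\Phi_{d}$ and therefore $\deg \tilde{\Pi}_{1} \le \sum_{d \le t,\, d \text{ odd}}\deg \Phi_{d} = \sum_{d \le t,\, d \text{ odd}}\varphi(d)$, the last equality holding since each $\Phi_{d}$ is monic of degree $\varphi(d)$ and reduction modulo $2$ preserves the degree. (In fact, for odd $d$ the polynomial $x^{d}-1$ is squarefree over $\F_{2}$, so the $\Phi_{d}$ are pairwise coprime and squarefree and this divisibility is an equality; this is worth recording because it shows the bound below is sharp for small $t$, but it is not needed for the inequality.) Finally I would carry out the elementary estimate: writing the odd integers in $\{1,\dots,t\}$ as $2k-1$ for $k = 1, \ldots, s$ with $s = \lceil t/2\rceil$, I use $\varphi(1)=1$ together with $\varphi(2k-1) \le (2k-1)-1 = 2k-2$ for $k \ge 2$ to obtain $\sum_{d \le t,\, d \text{ odd}}\varphi(d) \le 1 + \sum_{k=2}^{s}(2k-2) = 1 + s(s-1) = s^{2}-s+1$, which is exactly $\lceil t/2\rceil^{2} - \lceil t/2\rceil + 1$.

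The only genuinely delicate point is the first step: correctly tracking how the even exponents $i$ collapse, via the Frobenius, onto their odd parts, so that one does not overcount repeated factors and the distinct irreducible factors of $\Pi_{1}$ are all captured by cyclotomic polynomials of odd index at most $t$. Once that reduction is in place, the divisibility claim and the totient estimate are routine.
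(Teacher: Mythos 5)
Your proof is correct, and its first half coincides with the paper's argument: both use the characteristic-$2$ identity $x^{2^{a}m}+1=(x^{m}+1)^{2^{a}}$ to discard even exponents, so that every distinct irreducible factor of $\Pi_{1}$ divides $x^{m}+1$ for some odd $m\le t$. Where you genuinely diverge is in how the product over odd exponents is then bounded. The paper stays elementary: it observes that $x+1$ divides every factor and bounds $\deg\tilde{\Pi}_{1}$ by $1+\sum_{k=1}^{\lceil t/2\rceil}\deg\bigl((x^{2k-1}+1)/(x+1)\bigr)=1+\sum_{k=1}^{\lceil t/2\rceil}(2k-2)$, making no attempt to account for further overlaps among the quotients (for instance $x^{2}+x+1$ divides both $(x^{3}+1)/(x+1)$ and $(x^{9}+1)/(x+1)$). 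You instead invoke the cyclotomic factorization $x^{m}-1=\prod_{d\mid m}\Phi_{d}$, which partitions the irreducible factors with no double counting and, as your parenthetical correctly notes, gives the exact identification $\tilde{\Pi}_{1}=\prod_{d\le t,\,d\text{ odd}}\Phi_{d}$, hence $\deg\tilde{\Pi}_{1}=\sum_{d\le t,\,d\text{ odd}}\varphi(d)$; the stated bound then follows from $\varphi(d)\le d-1$. Your route therefore buys strictly sharper intermediate information---an equality rather than an inequality, and already a better numerical bound at $t=9$, where $\sum\varphi(d)=19$ versus the common final bound $21$---at the cost of importing standard facts about cyclotomic polynomials mod $2$; the paper's version is more self-contained, and its cruder count is all that the application in Theorem~\ref{thm:main1} requires, since both arguments terminate in the identical arithmetic $1+\lceil t/2\rceil\bigl(\lceil t/2\rceil-1\bigr)$.
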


\begin{proof}
Each factor $x^{i}+1$ in $\Pi_{1}$ is divisible by $x+1$.  
Furthermore, if $i$ is even, then $x^i+1=(x^{i/2}+1)^2$ and thus does not contribute new irreducible factors to $\tilde{\Pi}_{1}$. 
In other words,
\begin{align*}
\deg \big( \tilde{\Pi}_{1} \big)
&\le 1 +  \deg\left( \prod_{i=1}^{\lceil t/2 \rceil} \dfrac{x^{2i-1}+1}{x+1} \right) \\[5pt]
&= 1 + 2 + 4  + 6 + \cdots + (2 \lceil t/2 \rceil - 2) \\[5pt]
&= 1 + 2 \big(1 + 2 + 3 + \cdots +  (\lceil t/2 \rceil -  1)\big)  \\[5pt]
&= 1 +\bigg(   \Big\lceil\frac{t}{2}\Big\rceil - 1  \bigg)   \Big\lceil\frac{t}{2}\Big\rceil,
\end{align*}
from which the lemma follows.
\end{proof}

We immediately have the following corollary.

\begin{corollary}\label{cor:cyclotomic}
Let $t$ be an integer $\ge 2$.  
Set $\Pi_{2}=x\prod_{i=1}^{t}{(x^i+x^{i-1}+\dots+x+1)}\in\Fx$, 
and let $\tilde{\Pi}_{2}$ be the product of the distinct irreducible polynomials dividing $\Pi_{2}$. 
The degree of $\tilde{\Pi}_{2}$ is $\le \lceil (t+1)/2 \rceil^2$.
\end{corollary}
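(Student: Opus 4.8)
The plan is to reduce the corollary directly to Lemma~\ref{lemma:cyclotomic} by recognizing each factor of $\Pi_{2}$ as a quotient of a factor of the type appearing in $\Pi_{1}$. Over $\F_{2}$ one has the identity $x^{i}+x^{i-1}+\cdots+x+1=(x^{i+1}+1)/(x+1)$, so that
\[
\Pi_{2} = x \prod_{i=1}^{t} \frac{x^{i+1}+1}{x+1} = x \prod_{j=2}^{t+1} \frac{x^{j}+1}{x+1}.
\]
First I would compare the set of distinct irreducible factors of $\Pi_{2}$ with those of $\prod_{j=1}^{t+1}(x^{j}+1)$, which is precisely the polynomial $\Pi_{1}$ of Lemma~\ref{lemma:cyclotomic} with $t$ replaced by $t+1$.

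The key observation is that dividing each $x^{j}+1$ by $x+1$ loses no irreducible factors. Indeed, the factor $x+1$ survives: for even $j$ one has $x^{j}+1=(x^{j/2}+1)^{2}$, which is divisible by $(x+1)^{2}$, so $x+1$ still divides $(x^{j}+1)/(x+1)$ (this already occurs in the $j=2$ term). Every other irreducible factor $p(x)\ne x+1$ of some $x^{j}+1$ clearly persists after the division as well. Hence the distinct irreducible factors of $\prod_{j=2}^{t+1}(x^{j}+1)/(x+1)$ coincide with those of $\prod_{j=1}^{t+1}(x^{j}+1)$, since adjoining the $j=1$ term $x+1$ contributes nothing new. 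Finally, $x$ is genuinely a new irreducible factor, as $x\nmid x^{j}+1$ for every $j$. Therefore $\tilde{\Pi}_{2}=x\,\tilde{\Pi}_{1}$, where $\tilde{\Pi}_{1}$ is taken with parameter $t+1$.

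With this identification the estimate is immediate. Lemma~\ref{lemma:cyclotomic} applied with $t+1$ in place of $t$ gives
\[
\deg \tilde{\Pi}_{2} = 1 + \deg \tilde{\Pi}_{1} \le 1 + \Big\lceil \tfrac{t+1}{2} \Big\rceil^{2} - \Big\lceil \tfrac{t+1}{2} \Big\rceil + 1 = \Big\lceil \tfrac{t+1}{2} \Big\rceil^{2} + \Big( 2 - \Big\lceil \tfrac{t+1}{2} \Big\rceil \Big).
\]
Since $t\ge 2$ forces $\lceil (t+1)/2 \rceil \ge 2$, the parenthesized term satisfies $2-\lceil (t+1)/2 \rceil \le 0$, yielding the desired bound $\deg \tilde{\Pi}_{2} \le \lceil (t+1)/2 \rceil^{2}$.

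I do not expect any serious obstacle here; the only points requiring care are the bookkeeping of irreducible factors under division by $x+1$ — in particular verifying that the factor $x+1$ is not lost — and the final appeal to the hypothesis $t\ge 2$, which is exactly what absorbs the leftover constant into the clean bound $\lceil (t+1)/2 \rceil^{2}$.
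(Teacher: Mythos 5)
Your proof is correct and is precisely the argument the paper leaves implicit when it declares the corollary ``immediate'' from Lemma~\ref{lemma:cyclotomic}: rewrite each factor as $(x^{j}+1)/(x+1)$, apply the lemma with parameter $t+1$, add $1$ for the factor $x$, and use $t\ge 2$ to absorb the leftover $2-\lceil (t+1)/2\rceil\le 0$. Your extra bookkeeping (checking that $x+1$ survives the division, via the $j=2$ term) is exactly the detail the paper suppresses, and it is handled correctly.
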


\section{A proof of Theorem~\ref{thm:main1}}
To prove Theorem \ref{thm:main1}, we begin with a few technical lemmas.

\begin{lemma}\label{lemma:main1:1}
Fix $\epsilon \in (0,1)$, and let $n$ be a positive integer $\ge n_{0}(\epsilon)$ where $n_{0}(\epsilon)$ is sufficiently large.  
Set $t=\lceil 2\ln(\log_2 n)/(1-\epsilon)\rceil\in\N$. 
Let $\Pi_{2}$ be as in Corollary~\ref{cor:cyclotomic}.  
Let $f(x) \in\Fx$ with $\gcd(f(x),\Pi_{2})=1$ and $\deg f \le n$. 
Set 
\[
P(x) = P_{\epsilon}(x) = \prod_{\substack{p(x)\in\Fx \text{ irreducible}\\[2pt] \deg p \le t \\[2pt] p(x)\nmid f(x)}}{p(x)}.
\]
Then the polynomials in the collection
\[
\{f(x)+a(x)P(x)\}, \quad \text{where \ $a(x)\in\{1, x+1,x^2+x+1,\dots,x^t+x^{t-1}+\dots+x+1 \}$},
\] 
have no irreducible factors of degree $\le t$. Furthermore, the polynomials in this collection are pairwise coprime.
\end{lemma}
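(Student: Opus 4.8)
The plan is to prove the two assertions separately, the first being a statement about which small-degree irreducibles can divide a member of the family, and the second following quickly from it. The crucial structural observation is that the allowed multipliers $a(x)$ are exactly the factors of $\Pi_2$: for $a(x)\neq 1$ we have $a(x)=x^i+\cdots+x+1$ for some $1\le i\le t$, which is literally one of the factors in the product $\Pi_2=x\prod_{i=1}^t(x^i+\cdots+1)$, so $a(x)\mid\Pi_2$. The hypothesis $\gcd(f,\Pi_2)=1$ will therefore guarantee that no irreducible factor of $f$ divides any $a(x)$, and this is the only place the hypothesis on $f$ is used.

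For the first assertion, I would fix an irreducible $p(x)$ with $\deg p\le t$ and show $p\nmid f+aP$ by splitting on whether $p\mid f$. If $p\nmid f$, then by the very definition of $P$ the factor $p$ appears in the product, so $P\equiv 0\pmod{p}$ and hence $f+aP\equiv f\not\equiv 0\pmod{p}$. If instead $p\mid f$, then $p$ is excluded from the product defining $P$, so $p\nmid P$; moreover $p\nmid a$, because any nonconstant $a$ divides $\Pi_2$ while $p\mid f$ together with $\gcd(f,\Pi_2)=1$ forces $p\nmid\Pi_2$ (and $p\nmid 1$ trivially). Thus $f+aP\equiv aP\not\equiv 0\pmod{p}$. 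In either case $p\nmid f+aP$, which is exactly the claim that no member of the family has an irreducible factor of degree $\le t$.

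For the pairwise coprimality, I would suppose an irreducible $p$ divides both $f+aP$ and $f+bP$ for distinct $a,b$ in the multiplier set; subtracting in characteristic $2$ gives $p\mid(a+b)P$. If $\deg p\le t$, this contradicts the first assertion, since $p$ would then divide $f+aP$. If $\deg p>t$, then $p\nmid P$, as every irreducible factor of $P$ has degree $\le t$, so $p\mid(a+b)$; but $a\neq b$ gives $a+b\neq 0$ with $\deg(a+b)\le t<\deg p$, which is impossible. Hence no such common factor $p$ exists and the two polynomials are coprime.

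I do not anticipate a genuine obstacle here: once one notices that $P$ is engineered to annihilate precisely the degree-$\le t$ irreducibles absent from $f$ while leaving $f$ unchanged modulo the remaining small irreducibles, both parts reduce to short case checks. The only point requiring care is the bookkeeping that identifies the multiplier set with factors of $\Pi_2$ and the resulting use of $\gcd(f,\Pi_2)=1$; the specific value of $t$ plays no role in this lemma and is fixed only for compatibility with the later estimates.
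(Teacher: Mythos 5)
Your proof is correct and follows essentially the same route as the paper: the first assertion by splitting on whether the small-degree irreducible $p$ divides $f$ (using that each nonconstant $a(x)$ divides $\Pi_2$, so $\gcd(f,\Pi_2)=1$ forces $p\nmid a$), and coprimality by noting the difference $(a+b)P$ has only irreducible factors of degree $\le t$. Your write-up merely makes explicit the case $\deg p > t$ and the identification of the multipliers with factors of $\Pi_2$, both of which the paper leaves implicit.
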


\begin{proof}
Let $p(x)$ be an irreducible polynomial of degree $\le t$.  
Then $p(x)|f(x)$ or $p(x)|P(x)$, but not both.
If $p(x)|P(x)$, then $p(x)\nmid f(x)$ so that $p(x)\nmid \big(  f(x)+a(x)P(x)  \big)$.
If $p(x)|f(x)$ then $p(x)\nmid P(x)$. 
In this case, since $p(x)|f(x)$ and $\gcd(f(x),\Pi_{2})=1$, we deduce that $\gcd(p(x), a(x)) = 1$. 
Therefore, $p(x)\nmid \big(  f(x)+a(x)P(x)  \big)$.
Thus, the polynomials of the form $f(x)+a(x)P(x)$, as defined above, have no irreducible factors of degree $\le t$. 

We deduce then that the polynomials of the form $f(x)+a(x)P(x)$ are pairwise relatively prime since they have no irreducible factors of degree less than or equal to $t$ 
and the difference of any two distinct $f(x)+a(x)P(x)$ is divisible only by irreducible polynomials of degree less than or equal to $t$.
\end{proof}

\begin{lemma}\label{lemma:main1:2}
Fix $\epsilon \in (0,1)$, and let $n$ be a positive integer $\ge n_{0}(\epsilon)$ where $n_{0}(\epsilon)$ is sufficiently large.  
Let $t=\lceil 2\ln(\log_2 n)/(1-\epsilon)\rceil\in\N$. 
Suppose that $f_0(x), f_1(x), \dots, f_t(x) \in \Fx$ are polynomials of degree $\le n$ 
which are also pairwise relatively prime 
and have no irreducible factors of degree $\le t$. 
If $g(x)\in\Fx$ has degree $\le n$, 
then there exists a polynomial $g_1(x)\in\Fx$ with $\deg g_{1} \le n$ such that $L_2(g-g_1) \le \log_2 n$ and, 
for some $i \in \{ 0, 1, 2, \ldots, t \}$, we have $\gcd(g_1,f_i)=1$.
Furthermore, if $\deg g \ge \log_2 n$, then we may take $\deg g_{1} = \deg g$.  
\end{lemma}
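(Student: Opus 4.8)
The plan is to run a first-moment (averaging) argument over a family of low-weight perturbations of $g$, exploiting the two structural hypotheses together: each $f_i$ has only prime factors of degree $> t$, and the $f_i$ are pairwise coprime. Concretely, I would set $W = \lfloor \log_2 n\rfloor$ and let $S$ be the set of all $0,1$-polynomials of degree $< W$, so that $|S| = 2^W$ and every $h \in S$ satisfies $L_2(h) \le W \le \log_2 n$. For $h \in S$ I put $g_1 = g+h$; then automatically $\deg g_1 \le n$ and $L_2(g-g_1) = L_2(h) \le \log_2 n$, and when $\deg g \ge \log_2 n$ one has $\deg h \le W-1 < \deg g$, so the leading term of $g$ survives and $\deg g_1 = \deg g$, which yields the ``furthermore'' for free. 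Everything thus reduces to producing a single $h \in S$ with $g+h$ coprime to some $f_i$.

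For the main step, for $h \in S$ let $B(h)$ denote the number of indices $i$ with $\gcd(g+h,f_i)\neq 1$. The crucial point is that, because the $f_i$ are pairwise coprime, a prime $p$ dividing $g+h$ can divide at most one $f_i$; choosing for each bad index such a prime gives an injection, so $B(h)$ is at most the number of distinct irreducible factors of $F := \prod_{i=0}^{t} f_i$ dividing $g+h$. I would then average over $h\in S$: for each irreducible $p$ of degree $e$, the condition $h \equiv -g \pmod p$ cuts out a coset, so the proportion of $h \in S$ with $p \mid g+h$ is exactly $2^{-e}$ when $e \le W$ and at most $2^{-W}$ when $e > W$. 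Summing, $\tfrac{1}{|S|}\sum_{h\in S} B(h) \le \sum_{p\mid F,\, e_p \le W} 2^{-e_p} + 2^{-W}\,\#\{p \mid F : e_p > W\}$.

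To estimate this, I use that every prime factor of $F$ has degree $> t$, and that the number of irreducibles of degree $e$ over $\F_2$ is at most $2^e/e$. The first sum is then bounded by $\sum_{e=t+1}^{W}(2^e/e)\,2^{-e} = \sum_{e=t+1}^{W} 1/e \le \ln(\log_2 n)$, while the high-degree primes contribute at most $2^{-W}\deg F/(W+1) = O\big((t+1)/\log_2 n\big)$, using $\deg F \le (t+1)n$ and $2^W > n/2$. Hence the average of $B(h)$ is at most $\ln(\log_2 n) + o(1)$. Since $t = \lceil 2\ln(\log_2 n)/(1-\epsilon)\rceil$ gives $t+1 > 2\ln(\log_2 n)$, this average is strictly less than $t+1$ for $n \ge n_0(\epsilon)$. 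As $B(h)$ is an integer in $\{0,\dots,t+1\}$ with average below $t+1$, some $h \in S$ has $B(h)\le t$; for that $h$, $g_1 = g+h$ is coprime to at least one $f_i$, which is the desired conclusion. (Constant $f_i$ are handled trivially by taking $g_1 = g$, since then $\gcd(g,f_i)=1$.)

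I expect the main obstacle to be resisting the temptation of a naive union bound aimed at a single prescribed $f_i$: the quantity $\sum_{p\mid f_i} 2^{-\deg p}$ can exceed $1$, so one genuinely cannot always make $g+h$ coprime to a fixed $f_i$ with a low-weight $h$, and a per-index union bound blows up into an unbounded product. The averaging over the $t+1$ coprime moduli is precisely what defeats this: coprimality forces a single perturbed polynomial to fail for at most as many indices as it has distinct prime factors, converting the dangerous product into the benign sum $\sum_{e>t} 1/e$, which the choice of $t$ keeps well below $t+1$. The only routine care needed is in the boundary bookkeeping (degree preservation, the harmless high-degree tail, and the strictness of the final inequality).
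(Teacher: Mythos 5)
Your proof is correct and takes essentially the same approach as the paper: perturb the low-order coefficients of $g$ (degree $< \log_2 n$), double-count incidences between the $\approx n$ candidates and the irreducible factors of the $f_i$ with the split at the degree threshold $\log_2 n$, and exploit pairwise coprimality of the $f_i$ to gain the crucial factor of $t+1$. The only difference is organizational: the paper applies the pigeonhole over indices first (choosing the $i$ minimizing the number of bad candidates, then bounding medium- and high-degree factors for that fixed $i$), whereas you average the number of bad indices over candidates and finish by integrality of $B(h)$ --- the same double count read in the transposed direction.
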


\begin{proof}
We proceed by adjusting the coefficients of $g(x)$ in the terms of degree $< \log_2 n$ to produce the desired $g_1(x)$. 
Observe that there are at least $2^{\log_2 n}=n$ such possibilities for $g_1(x)$. 
Furthermore, if $\deg g \ge \log_{2}n$, then each such $g_{1}(x)$ satisfies $\deg g_{1} = \deg g$.  
We examine the possible irreducible polynomials $w(x)$ which can divide $\gcd(g_1,f_i)$. 
By the assumptions on the $f_i(x)$, we see that $\deg w>t$. 

We consider now two cases depending on whether (i) $t<\deg w\le\log_2 n$ or (ii) $\deg w>\log_2 n$.
After considering both cases, we combine information from the two cases to obtain the desired result.

Case (i):  
Let $d = \deg w$. For each fixed choice of the coefficients, say $a_{j} \in \{ 0, 1 \}$, of $x^{j}$ in $g_{1}(x)$ for $j \in \{ d, d+1, \ldots, \lfloor \log_{2} n \rfloor \}$, 
there is at most one choice
of the coefficients $a_{j} \in \{ 0, 1 \}$ of $x^{j}$ in $g_{1}(x)$ for $j \in \{ 0, 1, \ldots, d-1 \}$ such that $g_{1}(x)$ is divisible by $w(x)$.  
Thus, such a $w(x)$ divides at most $2^{(\log_2 n)-d+1}$ possibilities for $g_{1}(x)$. 

Since every irreducible polynomial in $\Fx$ of degree $d$ divides $x^{2^{d}} - x$, 
there are at most ${2^d}/{d}$ irreducible polynomials of degree $d$ in $\Fx$. 
Therefore, there are at most 
\[
\dfrac{2^{d}}{d} \times 2^{(\log_2 n)-d+1} = \dfrac{2n}{d}
\]
possibilities for $g_{1}(x)$ that are divisible by an irreducible polynomial of degree $d$.  
By summing over $d$ in the range $(t, \log_2 n]$, we deduce that there are at most 
\[
\sum_{t<d\le\log_2 n}{\frac{2n}{d}} \le 2n\left(\ln(\log_2 n)-\ln(t)+O\left(\frac{1}{t}\right)\right) 
\le 2n\ln(\log_2 n)
\]
possibilities for $g_{1}(x)$ having an irreducible factor $w(x)$ as in (i).  
As this estimate is $> n$, we need to revise this estimate.  
We explain next how to reduce the above estimate by a factor of $t+1$. 

Recall that we are wanting $\gcd(g_1,f_i)=1$ for some $i \in \{ 0, 1, 2, \ldots, t \}$ rather than for every such $i$.  
We choose the $i \in \{ 0, 1, 2, \ldots, t \}$ that minimizes the number of possibilities for $g_{1}(x)$ which are divisible by an irreducible 
$w(x) \in \Fx$ with $\deg w \in \big(t, \log_2 n\big]$ and $w(x) | f_{i}(x)$.
Since the $f_{j}(x)$ are pairwise relatively prime, 
we deduce that the number of possibilities for $g_{1}(x)$ with $\gcd(g_1,f_i)$ divisible by an irreducible 
$w(x) \in \Fx$ of degree $d \in \big(t, \log_2 n\big]$ is at most
\[
\dfrac{1}{t+1} \sum_{t<d\le\log_2 n}{\dfrac{2n}{d}}
\le \dfrac{2n\ln(\log_2 n)}{t+1}
\le (1-\epsilon)n.
\]
We proceed now to Case (ii) with this choice of $i$. 

Case (ii):  
In this case, we use that an irreducible polynomial with degree  $> \log_2 n$ can divide at most one possibility for $g_{1}(x)$. 
With $i$ as in Case (i), 
we see that $f_i(x)$ can have at most $n/{\log_2 n}$ distinct irreducible factors of degree greater than $\log_2 n$. 
Therefore, at most $n/{\log_2 n}$ possibilities for $g_{1}(x)$ have an irreducible factor of degree greater than $\log_2 n$ 
in common with $f_{i}(x)$.

By combining our estimates from Case (i) and (ii), 
we deduce that there is some $f_i(x)$ such that there are at most 
\[
(1-\epsilon)n + \dfrac{n}{\log_2 n}
\]
possibilities for $g_{1}(x)$ that share a non-constant factor with $f_i$. 
Therefore, with $n \ge n_{0}(\epsilon)$, there exists a $g_1(x) \in \Fx$ with $\deg g_1 \le n$ such that $L_2(g-g_1) \le \log_2 n$ and $\gcd(g_1,f_i)=1$ for some $i \in \{ 0, 1, \ldots, t \}$.
\end{proof}

Now we proceed with the proof of Theorem \ref{thm:main1}.

\begin{proof}[Proof of Theorem \ref{thm:main1}]
We take $n$ sufficiently large as stated in the theorem, and set $\epsilon' = \epsilon/(\epsilon+4\ln 2)$.  
Let $ t=\lceil 2\ln(\log_2 n)/(1-\epsilon')\rceil\in\N$, 
and let $\Pi_{2}$ be as in Corollary~\ref{cor:cyclotomic}.  
From Corollary~\ref{cor:cyclotomic}, we see that $\deg(\tilde{\Pi}_{2}) \le \lceil (t+1)/2 \rceil^2$.  
We apply Lemma~\ref{lemma:not_congr} using the polynomials $f_{e}(x)$ and $\tilde{\Pi}_{2}$ to deduce that
there exists $\tilde{f}(x)$ with $\deg \tilde{f} \le \lfloor n/2 \rfloor$ and $\gcd(\tilde{f}(x),\Pi_{2})=1$ such that 
\[
L_2(f_e - \tilde{f}) \le \lceil (t+1)/2 \rceil^2.
\]
Furthermore, if $\deg f_{e} \ge \lceil (t+1)/2 \rceil^2$, we can take $\deg \tilde{f} = \deg f_{e}$ and do so. 
Define $P(x) = P_{\epsilon'}(x)$ as in Lemma~\ref{lemma:main1:1}. 
By this lemma, the polynomials in $\{\tilde{f}(x)+a(x)P(x)\}$, where $a(x)\in\{1, x+1,x^2+x+1,\dots,x^t+x^{t-1}+\dots+x+1 \}$, 
have no irreducible factors of degree $\le t$. 
Furthermore, the polynomials in this collection are pairwise coprime. 
For $i \in \{ 0, 1, \ldots, t \}$, set $\tilde{f}_i=\tilde{f}+(x^i+x^{i-1}+\dots+x+1)P(x)$.  
Since $\tilde{f}_i$ has no irreducible factor of degree $\le t$, we have in particular that $\tilde{f}_i(0) \ne 0$.  
From Lemma~\ref{lemma:irr_prod}, we see that
\[
L_2(\tilde{f}-\tilde{f}_i) \le \deg (\tilde{f} - \tilde{f}_{i}) \le t + 2 \big( 2^{t} - 1 \big) < t + 4 (\log_2 n)^{2\ln(2)+\epsilon/2}, 
\quad \text{for $0 \le i \le t$}.
\]
By Lemma \ref{lemma:main1:2}, there is a polynomial $\tilde{g}_1(x)$ with 
$\deg \tilde{g}_1 \le \lfloor (n-1)/2 \rfloor$ 
such that $L_2(\tilde{g}_1-f_o) \le \log_2 n$ and $\gcd(\tilde{g}_1,f_i)=1$ for some $i \in \{ 0, 1, \ldots, t \}$.
Furthermore, we take as we can $\deg \tilde{g}_1 = \deg f_{o}$ if $\deg f_{0} \ge \log_{2} n$.  
With $i$ so fixed, we set $g(x) = \tilde{f}_i(x)^2 + x \tilde{g}_1(x)^2$. 
Observe that $\deg g\le n$ and $g(x)$ is squarefree by Lemma~\ref{lemma:binary}.  
The condition $\deg f = n$ implies that $\deg f_{i} = \deg f_{e}$ or $\deg \tilde{g}_1 = \deg f_{o}$ 
with both holding if $\deg f_{e}$ and $\deg f_{o}$ are both 
$\ge \max\{ \log_{2} n, \lceil (t+1)/2 \rceil^2 \}$.   This implies $\deg g = \deg f = n$.  
The estimate
\begin{align*}
L_2(f-g)&= L_2(\tilde{f}_i^2-f_e^2)+L_2(\tilde{g}_1^2-f_o^2) \\[5pt]
&=L_2(\tilde{f}_i-f_e)+L_2(\tilde{g}_1-f_o) \\[5pt]
&\le L_2(f_{e} - \tilde{f}) + L_2(\tilde{f} - \tilde{f}_i) + L_2(\tilde{g}_1-f_o) \\[5pt]
&\le \Big\lceil\frac{t+1}{2}\Big\rceil^2+t+4(\log_2 n)^{2\ln(2)+\epsilon/2} +\log_2 n<(\ln n)^{2\ln(2)+\epsilon}
\end{align*}
completes the proof of the theorem.
\end{proof}

\bibliography{sqfreepoly_2} 
\bibliographystyle{plain}

\end{document}